\def\R{\mathbb{R}} 
\def\C{\mathbb{C}} 
\def\calE{\mathcal{E}}
\def\F{\mathcal{F}}
\def\manW{\mathcal{W}}
\def\manH{\mathcal{H}}
\def\manU{\mathcal{U}}
\def\FM{\F_M}
\def\FR{\F_\R}
\def\sysC{\mathscr{C}}
\def\sysF{\mathscr{F}} 
\def\I{\mathscr{I}}
\def\J{\mathscr{J}}
\def\V{\mathscr{V}}
\def\Vchat{\widehat{\V}}
\def\Vhat{\widehat{V}}
\def\ve{\mathsf{e}}
\def\vE{\mathsf{E}} 
\def\vf{\mathsf{f}} 
\def\vn{\mathsf{n}}
\def\vv{\mathsf{v}}
\def\vx{\mathsf{x}}
\def\pmap{\mathbf{p}}
\def\rmap{\mathbf{r}}
\def\w{\omega}
\def\etat{\tilde{\eta}}
\def\tpsi{\psi'}
\def\ri{\mathrm{i}}
\def\hatj{\hat{\jmath}}
\def\II{\operatorname{II}}
\def\di{\partial}
\def\sech{\operatorname{sech}}
\renewcommand{\Re}{\operatorname{Re}}
\renewcommand{\Im}{\operatorname{Im}}
\def\restr{\vert}
\def\intprod{\mathbin{\raisebox{.4ex}{\hbox{\vrule height .5pt width
5pt depth 0pt %
         \vrule height 3pt width .5pt depth 0pt}}}}
\newcommand{\hook}{\intprod}
\newtheorem{thm}{Theorem}
\newtheorem{prop}[thm]{Proposition}
\newtheorem{cor}[thm]{Corollary}
\newtheorem*{thm*}{Theorem}
\begin{document}
\title{Isometric Embedding for Surfaces: \\ Classical Approaches and Integrability}
\author{Thomas A. Ivey}
\address{Dept. of Mathematics, College of Charleston\\
66 George St., Charleston SC 29424}
\email{iveyt@cofc.edu}
\date{\today}

\begin{abstract}
The problem of finding a surface in 3-dimensional Euclidean space with a given first fundamental form was classically known as Bour's problem, and is closely related to the problem of determining which surfaces in space are `applicable' (i.e., isometric without being congruent) to a given surface.  We review classical approaches to these problems, including coordinate-based approaches exposited by Darboux and Eisenhart, as well as the moving-frames based approaches advocated by Cartan.  In particular, the first approach involves reducing Bour's problem to solving a single PDE; settling the question of when this PDE is integrable by the method of Darboux is surprisingly easy, once we answer the analogous question for the isometric embedding system arising from the moving frames approach.  The latter question was settled in recent joint work with Clelland, Tehseen and Vassiliou.\end{abstract}
\maketitle

\section{Introduction}\label{introsec}
The problem of finding a surface in 3-dimensional Euclidean space with a given metric was classical known as {\em Bour's problem}.
The most notable advance in this 19th century in this area is sometimes known as the {\em fundamental theorem of surface theory}, and is due to Bonnet.  We state it here in modern language (cf. \cite{Spivak}, Vol. 3, Ch. 2).

\begin{thm*}[Bonnet, 1867] Let $(M,g)$ be an abstract Riemannian surface, and let $h$ be a symmetric 2-tensor on $M$ satisfying the conditions
\begin{align} \det( g^{-1} h ) &=  K,  \label{GaussEq} \\
		\nabla_i h_{jk} - \nabla_j h_{ik} &= 0 \label{CodazziEq},
\end{align}
where $K$ is the Gauss curvature of $g$ and $\nabla_i h_{jk}$ denotes the components in local coordinates of the covariant derivative of $h$ with respect to the Levi-Civita connection of $g$.  Then given any point $p\in M$, there exists an open set $\manU\subset M$ containing $p$, and an isometric embedding $\vf:\manU\to\R^3$
with second fundamental form $h$.  Moreover, $\vf$ is unique up to rigid motion.
\end{thm*}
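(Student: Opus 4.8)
The plan is to follow the moving-frames route advocated by Cartan, which reduces the statement to the fundamental existence theorem for maps into a Lie group (the Cartan--Darboux theorem). First I would shrink $M$ so that the neighborhood $\manU\ni p$ is simply connected and carries a smooth, positively oriented $g$-orthonormal coframe $\omega^1,\omega^2$; let $\omega^1_2$ be the associated Levi-Civita connection form, characterized by $d\omega^1=-\omega^1_2\wedge\omega^2$ and $d\omega^2=\omega^1_2\wedge\omega^1$, so that the Gauss curvature of $g$ is given by $d\omega^1_2=K\,\omega^1\wedge\omega^2$. Expanding the prescribed tensor as $h=h_{11}(\omega^1)^2+2h_{12}\,\omega^1\omega^2+h_{22}(\omega^2)^2$, I set $\omega^1_3:=h_{11}\omega^1+h_{12}\omega^2$ and $\omega^2_3:=h_{12}\omega^1+h_{22}\omega^2$, and form the $\mathfrak{se}(3)$-valued $1$-form
\begin{equation*}
\Omega=\begin{pmatrix}0&0&0&0\\ \omega^1&0&\omega^1_2&-\omega^1_3\\ \omega^2&-\omega^1_2&0&-\omega^2_3\\ 0&\omega^1_3&\omega^2_3&0\end{pmatrix}
\end{equation*}
on $\manU$, where the Lie group $\mathrm{SE}(3)$ of orientation-preserving rigid motions of $\R^3$ is realized by $4\times4$ matrices acting affinely on $\{1\}\times\R^3\subset\R^4$, with left Maurer--Cartan form $\mu$.

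The key step is to verify the Maurer--Cartan equation $d\Omega+\Omega\wedge\Omega=0$. Since $\manU$ is a surface, this is a $\mathfrak{se}(3)$-valued $2$-form identity, i.e.\ six scalar equations, and expanding it component-wise it splits cleanly into three groups: (i) the two torsion-free structure equations for $\omega^1_2$ together with the identity $\omega^1_3\wedge\omega^1+\omega^2_3\wedge\omega^2=0$, all of which hold automatically (the last being exactly the symmetry $h_{12}=h_{21}$ built into the definitions of $\omega^1_3,\omega^2_3$); (ii) the single equation $d\omega^1_2=\omega^1_3\wedge\omega^2_3$, which since $\omega^1_3\wedge\omega^2_3=\det(g^{-1}h)\,\omega^1\wedge\omega^2$ is precisely the Gauss equation \eqref{GaussEq}; and (iii) the pair of equations $d\omega^1_3=-\omega^1_2\wedge\omega^2_3$ and $d\omega^2_3=\omega^1_2\wedge\omega^1_3$, which after substituting the definitions of $\omega^1_3,\omega^2_3$ and the first structure equations reduce to the two independent components of the Codazzi equations \eqref{CodazziEq}. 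Thus $\Omega$ satisfies the Maurer--Cartan equation precisely because $(g,h)$ satisfies Gauss--Codazzi.

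Now I would invoke the Cartan--Darboux theorem: since $\manU$ is simply connected and $\Omega$ satisfies the Maurer--Cartan equation, there exists a smooth map $F\colon\manU\to\mathrm{SE}(3)$, unique up to left multiplication by a fixed element of $\mathrm{SE}(3)$, with $F^*\mu=\Omega$. Writing $F=(\vf;e_1,e_2,e_3)$, with $\vf\colon\manU\to\R^3$ the translational part and $(e_1,e_2,e_3)$ an orthonormal frame field along $\vf$, the identity $F^*\mu=\Omega$ unpacks into $d\vf=\omega^1 e_1+\omega^2 e_2$ together with $de_i=\sum_j\psi_{ji}\,e_j$, where $\psi$ is the skew $3\times3$ block of $\Omega$. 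The first relation shows that $\vf$ is an immersion with $\vf^*(\text{Euclidean metric})=(\omega^1)^2+(\omega^2)^2=g$, i.e.\ $\vf$ is isometric, and that $e_3$ is a unit normal field; the second then gives $de_3=-\omega^1_3 e_1-\omega^2_3 e_2$, whence the second fundamental form of $\vf$ with respect to $e_3$ equals $-\langle d\vf,de_3\rangle=\omega^1\omega^1_3+\omega^2\omega^2_3=h$. Shrinking $\manU$ once more turns the immersion $\vf$ into an embedding. For uniqueness, two isometric embeddings of $\manU$ into $\R^3$ with second fundamental form $h$ give rise to two such frame maps $F$, which by the uniqueness clause of Cartan--Darboux differ by a left translation --- that is, by a rigid motion of $\R^3$.

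I expect the genuine content to sit entirely in the Cartan--Darboux theorem itself, which is proved by the Frobenius theorem: one places on $\manU\times\mathrm{SE}(3)$ the rank-two distribution cut out by $\mathrm{pr}_1^*\Omega-\mathrm{pr}_2^*\mu=0$, checks its involutivity using the Maurer--Cartan equations on both factors, and takes the integral leaf through a chosen point, which is locally the graph of the desired $F$. The only laborious point in the present application is the bookkeeping of the second step --- confirming that the six scalar equations in the $\mathfrak{se}(3)$-valued Maurer--Cartan equation are exactly two trivial torsion identities, one identity expressing the symmetry of $h$, the single Gauss equation, and the Codazzi pair --- but with the frame fixed this is routine. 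A coordinate-based implementation, closer to the treatments of Darboux and Eisenhart developed below, is equivalent: one writes the Gauss--Weingarten equations as a linear first-order system for $\vf_u$, $\vf_v$ and the unit normal $\vn$ in a local chart, notes that its compatibility conditions are \eqref{GaussEq}--\eqref{CodazziEq}, integrates it via Frobenius from a prescribed initial frame at $p$, and then integrates $d\vf$ to recover $\vf$.
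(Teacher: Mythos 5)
Your proof is correct: the Maurer--Cartan computation does split into exactly the structure equations, the symmetry identity $\omega^1_3\wedge\omega^1+\omega^2_3\wedge\omega^2=0$, the Gauss equation, and the Codazzi pair, and the Cartan--Darboux/Frobenius step then delivers existence and uniqueness up to left translation by $\mathrm{SE}(3)$, i.e., up to rigid motion. The paper itself states Bonnet's theorem without proof (citing Spivak), but your argument is precisely the moving-frames reformulation the paper describes immediately afterward and later encodes as the isometric immersion system $\I_0$ and its prolongation $\I_1$ (the Gauss--Codazzi system), so this is essentially the intended approach.
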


Of course, \eqref{GaussEq} and \eqref{CodazziEq} are respectively known as the Gauss and Codazzi equations, and the latter can be more succinctly formulated as saying $\nabla h$ is a totally symmetric 3-tensor.  In terms of local coordinates $x^1, x^2$ on $M$, and the components of $g$ in these coordinates, the equations we are trying to solve are
\begin{equation}\label{isodot}
\dfrac{\di \vf}{\di x^i} \cdot \dfrac{\di \vf}{\di x^j} = g_{ij}.
\end{equation}
Below, we will reinterpret Bonnet's result as saying that, when the isometric immersion system \eqref{isodot} is reformulated in terms of moving frames, this results in an overdetermined set of linear differential equations for which the Gauss-Codazzi system is the solvability condition.

In hope of writing down solutions of \eqref{isodot} in at least some particular cases, 19th-century geometers such as Darboux and Weingarten sought to reduce the solvability conditions to a single scalar PDE.  Two approaches to this are described in \cite{Darboux}; we discuss the first here, following the exposition in \cite{Han}, Chapter 3.  (The second approach is the method of Weingarten, which we hope to discuss in a future paper.)

Let $\vf:M\to \R^3$ be an isometric immersion and let $\vn:M \to \R^3$ be a unit normal vector field along the image.  Regarding $\vf$ and $\vn$ as vector-valued functions on $M$, differentiating $\vn \cdot d\vf = 0$ shows that the second fundamental form of the image surface is given by
\begin{equation}\label{hbynabla}
h = \vn \cdot \nabla^2 \vf = -d\vn \cdot d\vf.
\end{equation}
Fix a unit vector $\hatj$ in $\R^3$, and on $M$ define the function $u = \hatj \cdot \vf$.   Differentiating this gives
\begin{equation}\label{doon}
du = \hatj \cdot d\vf
\end{equation}
and $\nabla^2 u = \hatj \cdot \nabla^2 \vf$, and then from \eqref{hbynabla} we have
$$\nabla^2 u = (\hatj \cdot \vn) h.$$
at points where $\hatj \cdot \vn \ne 0$.  This in turn, along with the Gauss equation \eqref{GaussEq}, implies that
$$\det( g^{-1} \nabla^2 u ) = (\hatj \cdot \vn)^2 K.$$
Moreover, resolving $\hatj$ in terms of its tangential and normal components and using \eqref{doon} shows that $(\hatj \cdot \vn)^2 = 1 - |du|^2$,
where the norm on the right is taken with respect to $g$.  As a result, we see that each component of $\vf$ must satisfy the scalar PDE
\begin{equation}\label{DarbouxPDE}
\det( g^{-1} \nabla^2 u ) = (1-|du|^2) K,
\end{equation}
which has come to be known as the {\em Darboux equation}.  Conversely, given a single solution $u$ of \eqref{DarbouxPDE} such that $|du|^2 < 1$, the metric $g-du^2$ is nondegenerate and flat, and thus there exist local coordinates $x,y$ such that $g - du^2 = dx^2 + dy^2$ (see, e.g., Lemma 3.1.1 in \cite{Han}), and so $\vf=(x,y,u)$ constitutes a (local) isometric immersion .

The main object of the present note is determining the set of metrics for which the partial differential
equation \eqref{DarbouxPDE} is integrable in the sense of the {\em method of Darboux}.
The article is organized as follows.  In \S2 we review some basic notions from moving frames and the theory of exterior differential systems (EDS), including the concepts of prolongation and integrable extension, and the definition of Darboux integrability.  In \S3 we formulate the isometric immersion problem as an EDS $\I_0$, and show how it is related to the Gauss-Codazzi system and to the Darboux equation.  In \S4 we show that the set of metrics for which \eqref{DarbouxPDE} is Darboux-integrable at the 2-jet level coincides exactly with the set of metrics for which $\I_0$ is Darboux-integrable; the latter classification was obtained in \cite{CITV}.

\section{Exterior Differential Systems}
In this section we review essential definitions used in later sections; more detailed discussion of these concepts may be found in \cite{CfB2}.

Let $M^m$ be a smooth manifold, let $\Omega^k (M)$ denote the $C^\infty(M)$-module of smooth differential $k$-forms on $M$, and let
$\Omega^\bullet(M) = \oplus_{k=0}^m \Omega^k(M)$.
This is an algebra with respect to the wedge product, and an {\em exterior differential system} on $M$
 is a homogeneous ideal in this algebra which is also closed under exterior differentiation.
 (For the sake of simplicity, we'll assume the ideal contains no 0-forms.)
An EDS $\I$ may be specified by a giving a list of differential forms on $M$ which are algebraic generators for the ideal, 
or more briefly giving a list of differential forms which generate the ideal along with their exterior derivatives.
Ideals generated by 1-forms and their derivatives are known as {\em Pfaffian systems}.  For such systems, we will let $I \subset T^*M$ denote the vector sub-bundle spanned by the 1-forms of the ideal; clearly, specifying the bundle $I$ is equivalent to specifying the Pfaffian system.
If $r$ is the rank of this vector bundle then we say $r$ is also the rank of the Pfaffian system generated by sections of $I$ and their exterior derivatives.  If the exterior derivatives of sections of $I$ are already in the algebraic ideal generated by those sections, the Pfaffian system is said to be Frobenius and (at least locally) can be generated by $r$ closed 1-forms.

The analogue of a `solution' for an EDS $\I$ is a submanifold $S\subset M$ such that every differential form in $\I$ pulls back, under the inclusion map,  to vanish on $S$.  These are called {\em integral submanifolds} of $\I$.  Exterior differential systems are often accompanied by non-degeneracy
conditions which take the form of an {\em independence condition} defined by a decomposable $n$-form $\Psi \in \Omega^n (M)$.  In that case, an $n$-dimensional integral submanifold $S$ is said to be {\em admissible} if $\Psi$ pulls back to be nowhere vanishing on $S$.  (The form $\Psi$ need only be well-defined up to nonzero multiple, and the addition of $n$-forms in $\I$)

To define the prolongation of $\I$, we need to consider its set of integral elements.
An $n$-dimensional subspace $\vE \subset T_p M$ is said to be an {\em integral element} of EDS $\I$ if $\vv\hook \varphi = 0$ for all vectors $\vv \in \vE$ and all $\varphi \in \I$.  We define $\calE_n(\I) \subset G_n(TM)$ as the set of all $n$-dimensional integral elements of $\I$,
and when $\I$ has an independence condition we let $\calE_n(\I,\Psi)$ denote the set of integral $n$-planes $\vE$ satisfying the independence condition (i.e.,
$\Psi\restr_\vE \ne 0$).  In most cases, $\calE_n(\I,\Psi)$ forms a smooth fiber bundle over $M$, so we will assume this.

The Grassmann bundle $G=G_n(TM)$ carries a natural Pfaffian system $\sysC$, known as the canonical contact system.  The 1-forms of $\sysC$
are sections of a vector bundle $C \subset T^*G$ whose fiber at a point $(p,\vE) \in G$
is $\pi^* \vE^\perp$, where $\pi: G\to M$ is the bundle map and $\vE^\perp$ denotes the annihilator of $\vE$ inside $T_p^* M$.
Then the {\em prolongation} of $\I$ is the pullback of $\sysC$ to $\calE_n(\I,\Psi)$, with independence condition given by $\pi^* \Psi$.  In practice, we calculate the prolongation by introducing fiber coordinates on $\calE_n(\I,\Psi)$ and writing the 1-forms that annihilate an admissible integral $n$-plane in terms of these coordinates.

In subsequent sections, we will re-formulate the problem of isometrically embedding a surface into Euclidean space as the problem of constructing admissible integral surfaces for an exterior differential system, and similarly recast the Gauss-Codazzi system and the Darboux equation as equivalent Pfaffian systems.  In particular, we will see that the Darboux equation is essentially the solvability condition for the Gauss-Codazzi system; this relationship will be made precise through the notion of {\em integrable extension}.

\subsection*{Integrable Extensions}
Let $\I$ be an EDS on manifold $M$,  let $\pi:N \to M$ be a submersion.  Then the pullbacks to $N$ of all differential forms of $\I$ 
form an EDS on $N$ which we denote by $\pi^* \I$.  But it is sometimes also possible to add to this ideal some new 1-forms which are not basic for $\pi$, and which form a new EDS without adding any generators of higher degree.
Specifically, an EDS $\J$ on $N$ is said to be an {\em integrable extension} of $\I$ if
$\pi^* \I \subset \J$ and there exists a sub-bundle $E \subset T^*N$ whose sections are transverse to the fibers of $\pi$
(i.e., the pairing between 1-forms in $E$ and vector fields tangent to the fiber is nondegenerate)
and such that $\J$ is algebraically generated by sections of $E$ and the differential forms in $\pi^* \I$.
It follows that if $S \subset M$ is an integral submanifold of $\I$, then $\J$ pulls back to $\pi^{-1}(S)$ to be a Frobenius system.

\subsection*{Darboux Integrability for Monge Systems}
We define a {\em Monge exterior differential system of class $k$} as an EDS defined on a manifold of dimension $k+4$ and which,
in the vicinity of any point, is algebraically generated by 1-forms $\psi_1, \ldots, \psi_k$ and two 2-forms $\Omega_1, \Omega_2$
(both sets assumed to be pointwise linearly independent).  For example, any second order PDE for one function of two variables is equivalent to
a Monge system of class 3, and if the PDE is a Monge-Amp\`ere equation then it is equivalent to a Monge system of class 1 (see, e.g., Chapter 7 in 
\cite{CfB2}).  Thus, Monge systems of class 1 also called Monge-Amp\`ere exterior differential systems.
Mirroring the typology of the corresponding PDE systems, Monge systems are divided into types as follows.

Again letting $I \subset T^*M$ denote the bundle spanned by the generator 1-forms of a class $k$ Monge system $\I$, we let $W \subset \Lambda^2(T^*M / I)$ be the bundle spanned by the generator 2-forms.
Then for $\Upsilon \in C^\infty(W)$ we define $$Q(\Upsilon) = \Upsilon \wedge \Upsilon \wedge \psi_1 \ldots \psi_k.$$
This is only well-defined up to multiple at each point of $M$, as we could choose a different set of 1-form generators.
Using the identification of the fiber of $\Lambda^{k+4} T^*M$ with $\R$, $Q$ becomes a scalar-valued quadratic form on the fiber of $W$ which is well-defined up to a nonzero multiple.
We say the Monge system is {\em elliptic, hyperbolic, or parabolic} at $p \in M$ according to whether this quadratic form is respectively nondegenerate and definite, nondegenerate and indefinite, or degenerate on the fiber $W_p$.  We will limit our attention to Monge systems where the type is constant.

For hyperbolic Monge systems we let $\Upsilon_+$ and $\Upsilon_-$ denote local sections of $W$ which are null directions for $Q$.
These 2-forms are decomposable, modulo sections of $I$, and we define the associated {\em characteristic Pfaffian systems} $\V_+$ (resp. $\V_-$)
as generated by the factors of $\Upsilon_+$ (resp. $\Upsilon_-$) together with $\psi_1, \ldots \psi_k$.
A hyperbolic Monge system is defined to be {\em Darboux-integrable} if each characteristic system contains a pair of closed 1-forms
which are linearly independent from each other and the sections of $I$.
Such systems were classically known to be solvable by ODE methods (see, e.g., Chapter IX in \cite{gursa}),
but the understanding of these systems, and their solvability by means of group actions,
has been greatly advanced by the recent work of Anderson, Fels and Vassiliou \cite{AFB,AFV}.  (In those papers, the characteristic systems are called {\em singular systems.})

For an elliptic Monge system $\I$, there are complex conjugate local sections $\Upsilon, \overline{\Upsilon}$ of $W \otimes \C$ which are null directions for the $\C$-linear extension of $Q$.  We analogously define the characteristic systems $\V, \overline{\V}$ to be generated by the sub-bundles of
$T^*M \otimes \C$ spanned by $\psi_1, \ldots, \psi_k$ and the factors of $\Upsilon$ (resp. $\overline{\Upsilon}$), and say that
$\I$ is Darboux-integrable if $\V$ contains a pair linearly independent closed (complex-valued) 1-forms (which, by complex conjugation, implies the same for $\overline{\V}$).

For parabolic Monge systems, Darboux integrability is not applicable, since there is only one characteristic system and the existence of first integrals for that system does not enable us to reduce to solving ODE systems.

We also note that Darboux integrability is relatively easy to check. Recall that, for a Pfaffian system
generated by sections of $I\subset T^*M$, the derived system of a $\I$ is generated by the sections of $I$ that are in the kernel of the map
$\delta_I: \theta \mapsto d\theta \mod I$ -- in other words, by 1-forms whose exterior derivatives are in the algebraic ideal generated purely
by the 1-forms of $\I$.  Applying this construction iteratively results in a sequence of vector bundles $I \supset I^{(1)} \supset I^{(2)} \ldots $,
where each member of which is the derived system of the previous one, and may be calculated using only differentiation and linear algebra.
Since this {\em derived flag} terminates in the highest-rank Frobenius system contained in $\I$, the Darboux integrability condition requires that the derived flag of each characteristic system terminate in a Frobenius system of rank at least two.

The prolongation of a Monge system of class $k$ is a Monge system of class $k+2$ of the same type, and while the initial system may not be Darboux-integrable, it may turn out that its prolongation is.  For example, we will see in \S\ref{DEsec} that the
PDE \eqref{DarbouxPDE} is equivalent to a Monge-Amp\`ere system $\J_0$ that, for certain metrics, becomes integrable after one prolongation.  Since this system $\J_0$
lives on the 5-dimensional space $J^1(M,\R)$ of 1-jets of functions on $M$, whereas its prolongation lives on the space of 2-jets, we say that
\eqref{DarbouxPDE} becomes Darboux-integrable at the 2-jet level.  There are examples given in Ch. VII of \cite{gursa} of second-order PDE that become Darboux-integrable at the $n$-jet level for arbitrarily $n$.


\section{Isometric Immersions via Moving Frames}
For an oriented Riemannian surface $M$, we will let $\FM$ denote the oriented orthonormal frame bundle of $M$.
This is a principal $SO(2)$-bundle and carries canonical 1-forms $\eta^1, \eta^2$ which are semibasic for the projection to $M$.  If
$\sigma$ is a local section of $\FM$, defined by an oriented orthonormal frame $\vv_1, \vv_2$, then $\etat^a =\sigma^* \eta^a$
is the dual coframe, where $1\le a \le 2$.  The 1-forms $\eta^1, \eta^2$, together with the connection 1-form $\eta^1_2$, comprise a global coframing on $\FM$, and these 1-forms satisfy the
structure equations
\begin{equation}\label{Mstruc}
d\eta^1 = -\eta^1_2 \wedge \eta^2, \quad d\eta^2 = -\eta^2_1 \wedge \eta^1, \quad d\eta^1_2 = K\eta^1 \wedge \eta^2,
\end{equation}
where $\eta^2_1=-\eta^1_2$, and $K$ is the Gauss curvature which is a well-defined function on $M$.

On the Euclidean side, we let $\FR$ denote the oriented orthonormal frame bundle of $\R^3$, which is a principal $SO(3)$-bundle.
Here, the canonical 1-forms $\w^i$ and connection forms $\w^i_j=-\w^j_i$ for $1\le i,j \le 3$ are readily defined
as coefficients in the derivatives of the vector-valued basepoint function $\vx$ and functions $\ve_1, \ve_2, \ve_3$ giving the members of the frame:
\begin{align}\label{d-of-basepoint} d\vx &= \ve_i \w^i, \\
\label{d-of-frame} d\ve_i &= \ve_j \w^j_i,
\end{align}
where we adopt the usual summation convention from now on.
Differentiating these equations leads in turn to the structure equations
$$d\w^i = -\w^i_j \wedge \w^j, \qquad d\w^i_j = -\w^i_k \wedge \w^k_j.$$

An isometric immersion $\vf:M \to \R^3$ induces a lift $\vf': \FM \to \FR$ that maps anypair of oriented orthonormal vectors $\vv_1, \vv_2$ at $p \in M$
to the orthonormal frame at $\vf(p)$ given by $\ve_1 = \vf_* \vv_1$, $\ve_2 = \vf_* \vv_2$ and $\ve_3 = \ve_1 \times \ve_2$.  The graphs of these lifts may be characterized as integral surfaces of an exterior differential system $\I_0$ which we will refer to as the {\em isometric immersion system}:

\begin{prop}[Prop. 1 in \cite{CITV}]  Let $\I_0$ be the Pfaffian system on $\Sigma_0:=\FM \times \FR$ generated by the 1-forms
$$\theta_0 := \w^3, \quad \theta_1 := \w^1 - \eta^1, \quad \theta_2 := \w^2 - \eta^2,\quad \theta_3 := \w^1_2 - \eta^1_2.$$
Then a surface $S\subset \Sigma_0$ is the graph of a lift of an isometric immersion defined on an open subset of $M$ if and only
if $S$ is an integral surface of $\I_0$ satisfying the independence condition $\eta^1 \wedge \eta^2 \ne 0$.
\end{prop}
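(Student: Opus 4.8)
The statement is an equivalence, so I would prove the two implications in turn; the forward one is a direct moving-frames computation, while the converse amounts to reconstructing the immersion from the data carried by $S$.

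For the forward implication, let $\vf\colon U\to\R^3$ be an isometric immersion on an open set $U\subset M$ and $\vf'\colon\FM|_U\to\FR$ its lift, so that $\vx\circ\vf'=\vf\circ\pi_M$ and, over a frame $(\vv_1,\vv_2)$ at $p$, one has $\ve_a\circ\vf'=\vf_*\vv_a$ for $a=1,2$ and $\ve_3\circ\vf'=\ve_1\times\ve_2$. Pulling back \eqref{d-of-basepoint} gives $d(\vf\circ\pi_M)=(\ve_i\circ\vf')\,\vf'^*\w^i$, while differentiating $\vf\circ\pi_M$ directly and invoking the defining property of the canonical forms $\eta^a$ gives $d(\vf\circ\pi_M)=(\vf_*\vv_1)\,\eta^1+(\vf_*\vv_2)\,\eta^2$; matching components in the (pointwise independent) frame $\ve_1,\ve_2,\ve_3$ yields $\vf'^*\w^1=\eta^1$, $\vf'^*\w^2=\eta^2$, $\vf'^*\w^3=0$, so $\theta_0,\theta_1,\theta_2$ vanish on the graph. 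For $\theta_3$ I would use the Gauss formula: the tangential component of the ambient derivative of $\vf_*\vv_1$ is the Levi-Civita derivative of $\vv_1$ for $g$, which the structure equations \eqref{Mstruc} express through $\eta^1_2$; pulling back \eqref{d-of-frame} and reading off the $\ve_2$-component of $d\ve_1$ then gives $\vf'^*\w^1_2=\eta^1_2$, so $\theta_3$ vanishes too. Since $\eta^1\wedge\eta^2$ is nowhere zero on $\FM$, the graph automatically satisfies the independence condition.

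For the converse, let $S\subset\Sigma_0$ be a ($3$-dimensional) integral surface with $\eta^1\wedge\eta^2\ne0$; on $S$ the generators read $\w^3=0$, $\w^1=\eta^1$, $\w^2=\eta^2$, $\w^1_2=\eta^1_2$. I would first show that the projection $S\to\FM$ is a local diffeomorphism onto an open set $\FM|_U$. Differentiating $\theta_0=\w^3$ and using the $\FR$ structure equations together with $\w^a=\eta^a$ on $S$ gives $\w^1_3\wedge\eta^1+\w^2_3\wedge\eta^2=0$; since $\eta^1,\eta^2$ are pointwise independent on $S$, Cartan's lemma writes $\w^1_3$ and $\w^2_3$ as linear combinations of $\eta^1$ and $\eta^2$. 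Hence on $S$ every one of the nine coframe $1$-forms of $\Sigma_0=\FM\times\FR$ is a combination of $\eta^1,\eta^2,\eta^1_2$, so a tangent vector of $S$ annihilated by these three forms is annihilated by the whole coframing and therefore zero; thus $\eta^1,\eta^2,\eta^1_2$ are pointwise independent on $S$ and $S\to\FM$ has maximal rank. Writing $S$ locally as the graph of $\Phi\colon\FM|_U\to\FR$ with components $\mathbf{y}:=\vx\circ\Phi$ and $\mathbf{f}_i:=\ve_i\circ\Phi$, pulling back \eqref{d-of-basepoint} and using $\Phi^*\w^3=0$, $\Phi^*\w^a=\eta^a$ gives $d\mathbf{y}=\mathbf{f}_1\eta^1+\mathbf{f}_2\eta^2$. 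As $\eta^1,\eta^2$ are semibasic for $\FM|_U\to U$ with connected fibers, $\mathbf{y}$ descends to a smooth map $\vf\colon U\to\R^3$ with $\mathbf{y}=\vf\circ\pi_M$; since $\mathbf{f}_1,\mathbf{f}_2$ are orthonormal, $\vf$ is an immersion and pulling back the Euclidean inner product gives $(\eta^1)^2+(\eta^2)^2=\pi_M^*g$, so $\vf$ is isometric. Evaluating $d\mathbf{y}$ on tangent vectors and using the defining property of $\eta^a$ identifies $\mathbf{f}_a=\vf_*\vv_a$, and since $\FR$ is the oriented frame bundle $\mathbf{f}_3=\mathbf{f}_1\times\mathbf{f}_2$; hence $\Phi=\vf'$ and $S$ is the graph of the lift of $\vf$. (Note $\theta_3=0$ is not needed to recover $\vf$ — it holds automatically for $\vf'$ — but it is essential for the rank count that pins $S$ down to dimension $3$.)

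The step I expect to be the main obstacle is precisely this regularity argument: turning the integrability and independence hypotheses into the statement that $S$ projects diffeomorphically onto an open set of $\FM$, so that an ``integral surface'' genuinely encodes the full lift $\vf'\colon\FM|_U\to\FR$ rather than an integral manifold of the wrong dimension. Everything else is bookkeeping with \eqref{Mstruc}, \eqref{d-of-basepoint}--\eqref{d-of-frame}, and the universal property of the orthonormal frame bundles.
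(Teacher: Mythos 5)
The paper itself gives no proof of this proposition---it is imported verbatim from \cite{CITV}---so there is no in-text argument to compare against; judged on its own, your proof is correct and is the standard moving-frames argument. Both halves are sound: the forward direction by pulling back \eqref{d-of-basepoint}--\eqref{d-of-frame} along the lift and matching coefficients against the canonical forms and the (uniquely characterized) Levi-Civita connection form of $\FM$, and the converse by using $d\theta_0=0$ plus Cartan's lemma to show that all nine coframe $1$-forms of $\Sigma_0$ restrict on $S$ to the span of $\eta^1,\eta^2,\eta^1_2$, so that $S$ projects by local diffeomorphism onto an open subset of $\FM$ and is the graph of a map that your computation then identifies with the lift $\vf'$ of a recovered isometric immersion. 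You were right to flag the dimension issue and to read ``surface'' as the $3$-dimensional graph of $\vf'\colon\FM|_\manU\to\FR$ (the $2$-dimensional usage later in the paper refers to the quotient by the $SO(2)$ Cauchy characteristic), and your rank count is precisely what makes that reading consistent. Two very minor quibbles: the projection of $S$ is a priori an open subset $V\subset\FM$ rather than a saturated set $\FM|_\manU$, so strictly $S$ is an open piece of the graph of a lift (harmless, since the statement is local); and the identification $\mathbf{f}_3=\mathbf{f}_1\times\mathbf{f}_2$ uses that $\FR$ consists of \emph{oriented} frames, which you correctly invoke.
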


Although $\I_0$ is defined on the 9-dimensional manifold $\Sigma_0$, it is the pullback of a Monge system of class 4 defined on an 8-dimensional quotient. To see this, note that there is a natural $SO(2)$-action on the fibers of $\FM$ that takes frame $(\vv_1, \vv_2)$ to a rotated
frame $(\cos \phi\, \vv_1 - \sin\phi \, \vv_2, \cos\phi \, \vv_2 + \sin\phi \, \vv_1)$.  When we similarly define an $SO(2)$-action on the fibers of $\FR$
that rotates vectors $\ve_1, \ve_2$ through an angle $\phi$, then the diagonal action of $SO(2)$ on $\Sigma_0$ gives a Cauchy characteristic
symmetry of $\I_0$.  Thus, $\I_0$ is the pullback of a well-defined EDS on the quotient manifold  $\Sigma_0/SO(2)$, and this EDS is a Monge system.  For the sake of convenience, however, we will calculate the structure of $\I_0$ on $\Sigma_0$.  For example, its generator 2-forms are
$$\Upsilon_1 = \w^3_1 \wedge \eta^1 + \w^3_2 \wedge \eta^2, \quad \Upsilon_2 = \w^3_2 \wedge \w^3_1 -K \eta^1 \wedge \eta^2.$$
It is easy now to check that the Monge system of elliptic, hyperbolic or parabolic type at a point of $\Sigma_0$ according to whether $K>0$, $K<0$ and $K=0$ at the underlying point on $M$.  Again assuming that the system is of constant type, we can characterize those metrics on $M$ for which
$\I_0$ is Darboux-integrable:

\begin{thm}[Theorem 2 in \cite{CITV}]\label{kcondthm}
Let $M$ be a Riemannian surface with either strictly positive or strictly negative curvature $K$; in the first case let $\varepsilon =1$, in the second case let $\varepsilon=-1$, and let $k=\sqrt{\varepsilon K}$.    Then the system $\I_0$ is Darboux integrable if and only if $k$ satisfies
\begin{equation}\label{kcond}
k_{11} = \dfrac52 \dfrac{k_1^2}{k}- 2\varepsilon k^3, \quad k_{12} = \dfrac52 \dfrac{k_1 k_2}{k}, \quad k_{22} = \dfrac52 \dfrac{k_2^2}{k}- 2\varepsilon k^3,
\end{equation}
where $k_a$ and $k_{ab}$ for $1\le a,b \le 2$ denote the components of the first and second covariant derivatives of $k$ with respect to an orthonormal
coframe on $M$.
\end{thm}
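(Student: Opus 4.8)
The plan is to establish the equivalence by computing the characteristic systems of the Monge system $\I_0$ and applying the derived-flag criterion for Darboux integrability, treating the elliptic case $K>0$ as the complexification of the hyperbolic case $K<0$; throughout, $\varepsilon=\pm1$ serves as a formal parameter with $\varepsilon^2=1$ and $K=\varepsilon k^2$. First I would descend to the $8$-dimensional quotient $\Sigma_0/SO(2)$ carrying the Monge system of which $\I_0$ is the pullback (or, equivalently, work on $\Sigma_0$ while carrying along the extra Cauchy-characteristic direction), fix a local orthonormal coframe on $M$, and pull back the structure equations \eqref{Mstruc} together with those of $\FR$. Modulo $\I_0$ one has $\w^1\equiv\eta^1$, $\w^2\equiv\eta^2$, $\w^1_2\equiv\eta^1_2$ and $\w^3\equiv0$, so besides the independence forms $\eta^1,\eta^2$ only $\w^1_3,\w^2_3$ --- which carry the second fundamental form $h$ --- and $\eta^1_2$ remain; the structure equations of $\I_0$ then take closed form once one uses $d\eta^1_2=K\,\eta^1\wedge\eta^2$, the identity $dK=2\varepsilon k\,(k_1\eta^1+k_2\eta^2)$, and the Gauss equation $\det(g^{-1}h)=K$ carried by the generator $2$-form $\Upsilon_2$. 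It is cleanest to prolong once, so that $h$ appears as genuine fiber coordinates $a=h_{11}$, $b=h_{12}$, $c=h_{22}$ subject to the single relation $ac-b^2=K$; then $\I_0$ becomes a Pfaffian Monge system whose new $2$-form generators come from the Codazzi equations $d\w^1_3=-\w^1_2\wedge\w^2_3$ and $d\w^2_3=\w^1_2\wedge\w^1_3$, and the computation below is carried out on the prolongation.

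Next I would identify the characteristic systems. Computing the quadratic form $Q$ on the rank-$2$ bundle $W$ spanned by $\Upsilon_1,\Upsilon_2$ shows that, in that basis, $Q$ is a nonzero multiple of $\mu^2+K\nu^2$, so its null $2$-forms are $\Upsilon_\pm=\lambda_\pm\Upsilon_1+\Upsilon_2$ with $\lambda_\pm^2=-K$, real precisely when $K<0$; this recovers the elliptic/hyperbolic/parabolic trichotomy already noted. The forms $\Upsilon_\pm$ are decomposable modulo $\theta_0,\ldots,\theta_3$, say $\Upsilon_\pm\equiv\alpha_\pm\wedge\beta_\pm$, and the characteristic systems are the rank-$6$ Pfaffian systems generated by $\theta_0,\theta_1,\theta_2,\theta_3$ together with $\alpha_\pm$ and $\beta_\pm$ (a single complex system $\V$ and its conjugate, in the elliptic case). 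I would then compute the derived flags $\V_\pm\supset\V_\pm^{(1)}\supset\V_\pm^{(2)}\supset\cdots$ by the standard algorithm, retaining at each stage those $1$-forms $\theta$ with $d\theta\equiv0$ modulo the current system. Since the quotient by $\V_\pm$ has rank $2$, the exterior derivatives of generic generators are nonzero there and the ranks drop; the task is to keep exact account of which combinations of $\alpha_\pm,\beta_\pm,\eta^1_2$ --- and, after prolongation, of the Codazzi-derived $1$-forms --- persist, and at which stage the flag stabilizes. Darboux integrability holds exactly when each characteristic system contains two closed $1$-forms independent of $\theta_0,\ldots,\theta_3$, equivalently when its derived flag terminates in a Frobenius system of rank at least two; by conjugation it suffices to verify this for $\V$ alone when $K>0$.

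Carrying out this reduction, the requirement that a candidate $1$-form survive into the next derived system forces, at successive stages, relations among $K$ and its covariant derivatives, the order of the derivative involved rising by one at each stage (through $dK$, then through $d(dk)=0$), with the structure equation $d\eta^1_2=K\,\eta^1\wedge\eta^2$ responsible for the terms $-2\varepsilon k^3=-2Kk$ in \eqref{kcond}. Assembling these relations and rewriting them covariantly --- using $d(k_1\eta^1+k_2\eta^2)=0$ to identify the $k_{ab}$ and their symmetry, and \eqref{Mstruc} to handle commutators of covariant derivatives --- yields exactly the system \eqref{kcond}. Conversely, assuming \eqref{kcond}, one checks that the flags do stabilize at rank two and exhibits the two independent closed $1$-forms in each characteristic system, establishing Darboux integrability. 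As a check on the answer: a metric of constant curvature has $k_a\equiv0$, for which the first equation of \eqref{kcond} reads $0=-2\varepsilon k^3$ and hence forces $k=0$, so no metric of constant nonzero curvature occurs --- consistent with the fact that surfaces of constant negative curvature are governed by the sine--Gordon equation, which is not Darboux-integrable, and those of constant positive curvature are rigid.

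The main obstacle is the derived-flag computation: the rank drops are not uniform, so one must track exactly which linear combinations of the characteristic factors survive each differentiation, and the three equations of \eqref{kcond} are all produced together at the single stage where one demands that the terminal Frobenius system have rank two rather than one. Two further points require care --- keeping the Gauss constraint $\det(g^{-1}h)=K$ enforced throughout, so that no spurious branch of the defining conic is introduced, and re-expressing the resulting curvature identities in the coframe-independent form \eqref{kcond}, which is what exhibits the statement as a closed second-order system for $k$ and makes its solutions recognizable.
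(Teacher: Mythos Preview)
The paper does not prove this theorem; it is quoted verbatim from \cite{CITV} (hence the attribution ``Theorem 2 in \cite{CITV}''). The only information the present paper gives about its proof appears inside the argument for Theorem~\ref{prongthm}: the characteristic systems $\V_\pm$ of $\I_0$ have derived flags of ranks $6,5,3$ for any nonconstant-curvature metric, and $\V_\pm^{(3)}$ drops to rank~$2$ (and is then Frobenius) if and only if the conditions~\eqref{kcond} hold. So the benchmark you are being compared against is that direct derived-flag calculation on $\V_\pm$.

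Your overall strategy---identify the null $2$-forms, form the characteristic systems, and run the derived-flag algorithm until the Frobenius condition forces second-order relations on $k$---is exactly the right one and matches what \cite{CITV} does. The problematic step is your decision to ``prolong once, so that $h$ appears as genuine fiber coordinates'' and to carry out the computation on the prolongation. That changes the object under study from $\I_0$ to $\I_1$, and the Darboux integrability of $\I_1$ is the content of the \emph{separate} Theorem~\ref{prongthm}, not of the theorem you are asked to prove. Since Darboux integrability passes to prolongations but not conversely, your prolonged computation would at best give the implication ``$\I_0$ Darboux-integrable $\Rightarrow$ \eqref{kcond}''; the reverse implication (that \eqref{kcond} makes $\I_0$ itself, not merely $\I_1$, Darboux-integrable) would still be outstanding. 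Moreover, no prolongation is needed: the factors $\alpha_\pm,\beta_\pm$ of the decomposable null $2$-forms $\Upsilon_\pm$ are already honest $1$-forms on $\Sigma_0$ (combinations of $\w^3_1,\w^3_2,\eta^1,\eta^2$), so the rank-$6$ systems $\V_\pm=\{\theta_0,\dots,\theta_3,\alpha_\pm,\beta_\pm\}$ and their derived flags live on $\Sigma_0$ without introducing the $h_{ab}$ as coordinates.

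A smaller point: your proposal asserts that the three equations in \eqref{kcond} ``are all produced together at the single stage where one demands that the terminal Frobenius system have rank two rather than one,'' but you do not actually carry out that stage, and the paper records that the rank drop $5\to 3$ at $\V_\pm^{(2)}$ is nontrivial and must be verified before the final step. As written, the proposal is an outline of the correct method rather than a proof; the substantive content---the explicit derived-flag reduction producing \eqref{kcond}---is precisely what is deferred to \cite{CITV}.
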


\begin{thm}[Theorem 5 in \cite{CITV}]\label{metriclist}
Let $M$ is a Riemannian surface with metric $g$ satisfying the conditions in Theorem \ref{kcondthm}.  For $K>0$, there are local coordinates $x,y$ on $M$ in which the metric is one of the following:
\begin{align*}
g_1 &= (\cosh^4 x) dx^2 + (\sinh^2 x) dy^2, \\
g_2 &= (\sinh^4 x) dx^2 + (\cosh^2 x) dy^2,\\
g_3 &= x^2 (dx^2 + dy^2)
\end{align*}
for $x>0$.  For $K<0$, there are local coordinates in which the metric is
$$g_4 = (\cos^4 x) dx^2 + (\sin^2 x) dy^2, \qquad x \in (0,\pi/2).$$
\end{thm}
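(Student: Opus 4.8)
\emph{Proof strategy.} The plan is to work on $M$ itself, rewriting the hypotheses \eqref{kcond} in the invariant form
\[ \nabla^2 k = \tfrac52\, k^{-1}\, dk\otimes dk \;-\; 2\varepsilon k^3\, g . \]
First note that $k$ cannot be constant on any open set: if $dk$ vanished there, \eqref{kcond} would force $0 = -2\varepsilon k^3$, impossible since $k>0$. Hence $dk\neq 0$ on a dense open subset $M'\subset M$, and it suffices to prove the claim on $M'$; the listed metrics all have $dk\neq0$ throughout the stated coordinate ranges, and any remaining point is an isolated nondegenerate critical point of $k$, recovered by analytic continuation (the system \eqref{kcond} being real-analytic).

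The essential step is the substitution $w := k^{-3/2}$. A direct computation from \eqref{kcond} shows that the quadratic terms cancel exactly, leaving
\[ \nabla^2 w = 3\varepsilon\, k^{1/2}\, g = 3\varepsilon\, w^{-1/3}\, g , \]
so that $w$ is a (nonconstant) concircular function. Consequently $d\,|dw|^2 = 2\,\nabla^2 w(\nabla w,\,\cdot\,) = 6\varepsilon\, w^{-1/3}\, dw$, so $|dw|^2$ depends only on $w$, and integrating gives $|dw|^2 = 9\varepsilon\, w^{2/3} + C$ for some constant $C$.

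Next I would put $g$ in normal form. Since $\nabla^2 w$ is a multiple of $g$, the integral curves of $\nabla w$ are pregeodesics along which $w$ is monotone; using $w$ as one coordinate and transporting a coordinate $y$ from a level set $\{w=\mathrm{const}\}$ along this orthogonal flow produces coordinates $(w,y)$ in which $g$ has no cross term, $g_{ww}=|dw|^{-2}$ depends only on $w$, and—extracting the $yy$-component of $\nabla^2 w = \tfrac12\,(|dw|^2)'\,g$—one finds after rescaling $y$ that
\[ g = \frac{dw^2}{V(w)} + V(w)\, dy^2 , \qquad V(w) = 9\varepsilon\, w^{2/3} + C . \]
The Gauss curvature of this metric is $-\tfrac12 V'' = \varepsilon\, w^{-4/3} = \varepsilon k^2$, confirming consistency and showing that no further restriction on $C$ appears. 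Finally, after a constant rescaling of $g$ (which merely rescales $C$) and a reparametrization of $y$, the substitution $w=\cosh^3 x$ (for $\varepsilon=1$, $C<0$), $w=x^3$ ($\varepsilon=1$, $C=0$), $w=\sinh^3 x$ ($\varepsilon=1$, $C>0$), or $w=\cos^3 x$ ($\varepsilon=-1$, where positivity of $V$ forces $C>0$) brings $g$ to $g_1$, $g_3$, $g_2$, and $g_4$ respectively.

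The only genuinely delicate point is recognizing the substitution $w=k^{-3/2}$ that linearizes \eqref{kcond} to a concircular-field equation; everything afterward is classical normal-form theory together with the bookkeeping of matching the integration constant $C$ to the four named metrics (and the minor analytic-continuation remark at critical points of $k$).
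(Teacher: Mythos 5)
The paper does not actually prove this statement—it is imported verbatim from \cite{CITV} (Theorem 5 there), so there is no internal proof to compare against. Judged on its own, your argument is essentially correct and complete, and the key computation checks out: writing \eqref{kcond} invariantly as $\nabla^2 k = \tfrac52 k^{-1}\,dk\otimes dk - 2\varepsilon k^3 g$ and substituting $w=k^{-3/2}$ does cancel the quadratic term and yields the concircular equation $\nabla^2 w = 3\varepsilon w^{-1/3} g$; the first integral $|dw|^2 = 9\varepsilon w^{2/3}+C$, the warped-product normal form $g = V(w)^{-1}dw^2 + V(w)\,dy^2$, the curvature check $K=-\tfrac12 V''=\varepsilon k^2$, and the case analysis on the sign of $C$ (with $C>0$ forced when $\varepsilon=-1$) all verify, and the indicated substitutions do produce $g_1,\dots,g_4$.

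One point deserves to be flagged rather than absorbed silently: your derivation produces the listed metrics only \emph{up to a constant homothety} (you normalize $|C|$ to $9$ by rescaling $g$). This is not a defect of your argument but a genuine feature of the classification: the conditions \eqref{kcond} are invariant under $g\mapsto\lambda^2 g$, whereas $g_1$, $g_2$, $g_4$ are not scale-invariant (e.g.\ for $g_1$ one computes $|dk|^2=4k^4(1-k)$, which becomes $4k^4(1-\lambda k)$ after rescaling), so $\lambda^2 g_1$ satisfies the hypotheses without being locally isometric to any of the four listed metrics. Hence the statement as quoted must be read ``up to constant multiple,'' and a careful write-up should say so explicitly, identifying the homothety class with the sign of $C$ ($C<0,\,0,\,>0$ giving $g_1,g_3,g_2$ for $K>0$). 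The closing remark about recovering isolated critical points of $k$ by analytic continuation is unnecessary for a local normal-form statement and is the weakest part of the write-up, but it is harmless: at a critical point \eqref{kcond} forces $\nabla^2 k=-2\varepsilon k^3 g\neq0$, so such points are isolated and the classification holds on the dense open set where $dk\neq0$.
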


As we will see below, this is exactly the set of metrics for which the Darboux equation \eqref{DarbouxPDE} is integrable by the method of Darboux.  In order to establish this,
we need to calculate the prolongation $\I_1$ of $\I_0$, which we will in turn connect to the Darboux equation in the next section.

If $\vE \in T_q \Sigma_0$ is an admissible integral 2-plane of $\I_0$, then the vanishing of $\Upsilon_1\restr_\vE$ shows that
there are numbers $h_{ab}=h_{ba}$ such that
\begin{equation}\label{anniE}
(\w^3_a -h_{ab} \eta^b)\restr_\vE = 0,\qquad 1\le a,b\le2,\end{equation}
and the vanishing of $\Upsilon_2 \restr_\vE$ shows that these numbers must satisfy
\begin{equation}\label{gausseq}
h_{11} h_{22} - h_{12}^2 = K.\end{equation}

If $S$ is an admissible integral surface of $\I_0$ then the $h_{ab}$ are functions along $S$ that give the components of the second fundamental
form $\II$ of $f(M)$ under the corresponding isometric immersion $f$, with respect the orthonormal frame $\ve_1, \ve_2$.  Then we see that \eqref{gausseq}
is nothing but the Gauss equation.  Moreover, the $h_{ab}$ uniquely determine the image of $\vE$ under the quotient map to $\Sigma_0/SO(2)$.
Thus, we may regard the $h_{ab}$ as fiber coordinates\footnote{Of course these are not really coordinates, since the Gauss equation makes them functionally dependent.  When necessary, we may introduce independent fiber coordinates $\lambda, \mu$ by setting
$$h_{11} = k e^\lambda \cosh \mu, \quad h_{12} = k \sinh \mu, \quad h_{22} = k e^{-\lambda} \cosh \mu$$
in the elliptic case, or a similar parametrization with $\cosh$ and $\sinh$ interchanged in the hyperbolic case.}
for the prolongation $\I_1$, and the generator 1-forms are the pullbacks of the 1-forms of $\I_0$ with the addition of the
1-forms on the left in \eqref{anniE} which annihilate $\vE$.

More formally, we let $h_{ab}$ be coordinates on the space $S^2(\R^2)$ of symmetric
$2\times 2$ matrices, let $\manH \subset S^2(\R^2)\times \FM$ be the hypersurface defined by the Gauss equation \eqref{gausseq}, let $\Sigma_1:=\manH \times \FR$, and let $\pmap$ denote the restriction to $\Sigma_1$ of the projection from $S^2(\R^2)\times \Sigma_0$ onto the second factor.
Then we define the prolongation as the Pfaffian system $\I_1$ on $\Sigma_1$ generated by the pullbacks $\pmap^*\theta_0, \ldots, \pmap^* \theta_3$ of the generators from above, and the new 1-forms
$$\nu_1 := \w^3_1 - h_{1a} \eta^a, \quad \nu_2 := \w^3_2 - h_{2a} \eta^a.$$
The 2-form generators of $\I_1$ are computed to be
\begin{equation}\label{codfish}
Dh_{1a} \wedge \eta^a \text{ and } Dh_{2a} \wedge \eta^a,\end{equation}
where
$$Dh_{ab} := dh_{ab} - h_{ac} \eta^c_b - h_{cb} \eta^c_a, \qquad 1\le a,b,c \le 2.$$
Because the vanishing of the 2-forms in \eqref{codfish} is easily seen to be equivalent to the condition that the covariant derivative $\nabla \II$ is a totally symmetric $(3,0)$-tensor, we see that $\I_1$ is equivalent to the Gauss-Codazzi system for the second fundamental form of the immersion.

In general, the prolongation of a Darboux-integrable system is also Darboux-integrable, but (as mentioned earlier) a system that is not Darboux-integrable may become Darboux-integrable after sufficiently many prolongations.  In the case of $\I_0$ we find that no additional metrics exist
for which the prolongation $\I_1$ is Darboux-integrable:

\begin{thm}\label{prongthm} The Gauss-Codazzi system $\I_1$ is Darboux-integrable only when the conditions \eqref{kcond} in Theorem \ref{kcondthm} hold.
\end{thm}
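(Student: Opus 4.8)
The easy half of the theorem is immediate: since the prolongation of a Darboux-integrable Monge system is again Darboux-integrable (and of the same type), Theorem~\ref{kcondthm} shows that if \eqref{kcond} holds then $\I_0$ is Darboux-integrable, hence so is $\I_1$. So the plan is to prove the converse: if $\I_1$ is Darboux-integrable then $k$ satisfies \eqref{kcond}. I would do this by computing the characteristic systems of $\I_1$ and their derived flags, and showing that, for metrics violating \eqref{kcond}, the requirement that each flag terminate in a Frobenius system of rank at least two cannot be met by the extra structure introduced in prolonging $\I_0$. As in Theorem~\ref{kcondthm}, assume $K$ is everywhere strictly positive or strictly negative, so that $\I_0$ and $\I_1$ are of constant elliptic or hyperbolic type; I will treat the hyperbolic case $K<0$ (set $\eps=-1$, $k=\sqrt{-K}$), the elliptic case being word-for-word the same after complexifying and replacing the pair of real characteristic systems by a conjugate pair.

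First I would fix an adapted coframe on $\Sigma_1$: the pullbacks of $\eta^1,\eta^2,\eta^1_2$ from $\FM$ and of $\w^i,\w^i_j$ from $\FR$, together with the fiber coordinates $\lambda,\mu$ of the footnote (chosen so the $h_{ab}$ are smooth and \eqref{gausseq} is automatic). On the ten-dimensional quotient $\Sigma_1/SO(2)$ this gives a coframe consisting of $\eta^1,\eta^2$, the six generators $\theta_0,\theta_1,\theta_2,\theta_3,\nu_1,\nu_2$ of $\I_1$, and two further 1-forms obtained as independent combinations of $Dh_{11},Dh_{12},Dh_{22}$ modulo $\eta^1,\eta^2$ (the differential of \eqref{gausseq} eliminating the third combination). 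Writing the 2-form generators \eqref{codfish} in this coframe, the null directions of the quadratic form $Q$ are $\Upsilon_\pm\equiv\rho_\pm\wedge\chi_\pm\pmod{\I_1}$, where $\chi_\pm=\eta^1+\lambda_\pm\eta^2$ is an asymptotic 1-form of the immersion ($\lambda_\pm$ being the roots of $h_{11}\lambda^2-2h_{12}\lambda+h_{22}=0$, real since $K<0$) and $\rho_\pm$ is the corresponding combination of the $Dh_{ab}$. Hence the characteristic Pfaffian systems of $\I_1$ are
$$\V_\pm=\langle\theta_0,\theta_1,\theta_2,\theta_3,\nu_1,\nu_2,\rho_\pm,\chi_\pm\rangle,$$
of rank eight. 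A direct check using $\w^3_a=\nu_a+h_{ab}\eta^b$ and the Gauss relation then shows that $\pmap:\Sigma_1\to\Sigma_0$ pulls the rank-six characteristic systems $\Vchat_\pm$ of $\I_0$ (analyzed in \cite{CITV}) into $\V_\pm$, so that $\V_\pm$ is obtained from $\pmap^*\Vchat_\pm$ by adjoining the two 1-forms $\chi_\pm$ and $\rho_\pm$.

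It remains to compute the derived flags $\V_\pm\supset\V_\pm^{(1)}\supset\cdots$. Because $\pmap$ is a submersion, one has $\pmap^*(\Vchat_\pm)^{(j)}\subset\V_\pm^{(j)}$ for every $j$, and the flag of $\Vchat_\pm$ stabilizes at a Frobenius system of rank at least two exactly when \eqref{kcond} holds (Theorem~\ref{kcondthm}); so the issue is whether, for metrics not satisfying \eqref{kcond}, the additional 1-forms $\chi_\pm$ and $\rho_\pm$ can be combined with each other and with the pulled-back flag to yield two independent closed 1-forms. Differentiating, $d\chi_\pm$ and $d\rho_\pm$ reduced modulo $\V_\pm$ are explicit 2-forms whose coefficients involve $k$, its covariant derivatives, and the fiber variables $\lambda,\mu$; the Codazzi 2-forms \eqref{codfish} control how the $Dh_{ab}$, hence $\rho_\pm$, differentiate. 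Carrying the reduction through stage by stage, and eliminating the fiber variables at each step, the obstruction to $\chi_\pm$ and $\rho_\pm$ (or any admissible modifications of them) surviving into the terminal Frobenius system is governed by precisely the three curvature quantities whose vanishing is \eqref{kcond}: prolongation propagates, rather than removes, the integrability defect of $\I_0$. Thus $\I_1$ Darboux-integrable forces \eqref{kcond}, which together with the elliptic case and the easy direction completes the proof.

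The main obstacle is this last step: organizing the derived-flag calculation for the rank-eight systems $\V_\pm$ so that the dependence on $\lambda,\mu$ can be cleanly eliminated and the surviving conditions recognized as \eqref{kcond}. Conceptually, the crux is to show that the 1-forms introduced by prolongation cannot be completed to closed forms independently of the curvature conditions. What makes this tractable is the inclusion $\pmap^*\Vchat_\pm\subset\V_\pm$, which lets one reuse the \cite{CITV} analysis for the directions inherited from $\I_0$ and treat only the two genuinely new directions $\chi_\pm,\rho_\pm$ by hand, together with the fact that the Gauss--Codazzi system is the solvability condition for the isometric immersion system.
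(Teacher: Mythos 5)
Your setup is in the right territory: the necessity direction does come down to analyzing the derived flags of the characteristic systems of $\I_1$, and the inclusion of the pulled-back characteristic systems of $\I_0$ into those of $\I_1$ (which you correctly identify, though note your notation is reversed relative to the paper, which uses $\V_\pm$ for the $\I_0$-level systems and $\Vchat_\pm$ for those of the prolongation) is indeed the structural fact that makes the problem tractable. Incidentally, the ``easy half'' you dispose of first is not part of the statement --- the theorem asserts only necessity of \eqref{kcond}.

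The decisive step, however, is missing. Your argument bottoms out in the assertion that ``prolongation propagates, rather than removes, the integrability defect of $\I_0$,'' and that is precisely what cannot be assumed: as the paper itself emphasizes, a non-Darboux-integrable system can become Darboux-integrable after prolongation (Goursat gives examples requiring arbitrarily many prolongations), so the defect does not propagate automatically, and your plan of ``carrying the reduction through stage by stage'' restates the problem rather than solving it. In particular, nothing in your sketch rules out the possibility that a rank-two Frobenius subsystem appears inside the terminal derived system of the rank-eight characteristic systems through combinations involving the new fiber directions. The paper closes exactly this gap with a specific mechanism: (i) the sharper inclusion $\pmap^*\V_+\subset\Vchat_+^{(1)}$, which shifts the flags and gives $\pmap^*\V_+^{(k)}\subset\Vchat_+^{(k+1)}$; (ii) the generic ranks $7,6,4$ of $\Vchat_+^{(1)},\Vchat_+^{(2)},\Vchat_+^{(3)}$, together with the ranks $5,3$, and (when \eqref{kcond} fails) $1$, of the flag of $\V_+$ computed in \cite{CITV}; and (iii) the observation that if $\Vchat_+^{(3)}$ contained a rank-two Frobenius system $\sysF$, then the 1-form $\alpha$ spanning the rank-one system $\V_+^{(3)}$ would have to pull back into $\sysF$, hence be expressible through two exact differentials, forcing $\alpha\wedge(d\alpha)^2=0$ --- which a direct computation contradicts whenever $\V_+^{(3)}$ has rank one. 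Some argument of this kind, converting the hypothesis of Darboux integrability of $\I_1$ into a checkable obstruction at the $\I_0$ level, is what your proposal needs and does not supply.
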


\begin{proof}
The proof is a lengthy calculation which unfortunately cannot be included here for reasons of space; however, we will briefly sketch the key idea.

Let $\V_+^{(k)}$ (respectively, $\V_-^{(k)}$) denote the members of the derived flag of the characteristic system $\V_+$ (resp., $\V_-$) of $\I_0$, and similarly let $\Vchat_{\pm}^{(k)}$ denote members of the derived flags of the characteristic systems of the prolongation.   In \cite{CITV}, it was shown that
$\V_+^{(1)}, \V_+^{(2)}$ have ranks 5 and 3 respectively for any metric where $K$ is not constant, and $\V_+^{(3)}$ has rank 2 if and only if the conditions \eqref{kcond} hold, in which
case $\V_+^{(3)}$ is a Frobenius system.  (All statements for the `plus' systems carry over without change to the `minus' systems.)  Similarly, we find
that $\Vchat_+^{(1)}, \Vchat_+^{(2)}, \Vchat_+^{(3)}$ generically have ranks 7, 6 and 4 respectively.
It is standard that $\pmap^* \V_+ \subset \Vchat_+$, and consequently $\pmap^* \V^{(k)}_+ \subset \Vchat_+^{(k)}$, but in this case we also find
that $\pmap^* \V_+ \subset \Vchat_+^{(1)}$, and hence $\pmap^* \V^{(k)}_+ \subset \Vchat_+^{(k+1)}$.  In particular, $\Vchat_+^{(3)}$ contains
the rank 3 subsystem $\pmap^*\V_+^{(2)}$.

Suppose that the conditions \eqref{kcond} do not hold, but nevertheless $\Vchat_+^{(3)}$ contains a Frobenius subsystem $\sysF$ of rank 2.  Thus, $\Vchat_+^{(4)}$ must have rank at least two, and thus $\V_+^{(3)}$ must have rank at least one.  Because the conditions in \eqref{kcond} do not hold, these ranks must be exactly two and one respectively.  Suppose $\V_+^{(3)}$  is spanned by a 1-form $\alpha$; then $\pmap^*\alpha$ must belong to $\sysF = \Vchat_+^{(4)}$.  By writing $\pmap^*\alpha$ as a linear combination of two exact differentials, one sees that $\alpha$ must have Pfaff rank at most one, i.e.,  $\alpha\wedge d\alpha$ may be nonzero but $\alpha \wedge (d\alpha)^2 =0$.   However, we calculate that whenever $\V_+^{(3)}$ has rank one, its generator satisfies $\alpha \wedge (d\alpha)^2 \ne 0$.
\end{proof}

\section{Integrability of the Darboux Equation}\label{DEsec}
In this section we will set up the Darboux equation as a Monge-Amp\`ere EDS $\J_0$, show how it is related to the Gauss-Codazzi system $\I_1$, and determine the metrics for which the prolongation $\J_1$ of $\J_0$ is Darboux-integrable.

Given a function $u$ on $M$, the components of the first and second covariant derivatives of $u$ with respect to an orthonormal coframe depend on
the choice of coframe, and are thus well-defined as functions on $\FM$ rather than $M$.  In terms of the canonical and connection forms on $\FM$ these components are defined
\begin{equation}\label{ucovs}
du = u_a \eta^a, \qquad du_a = u_b \eta^b_a + u_{ab} \eta^b,
\end{equation}
where, in the left-hand side of the first equation, $u$ is pulled back to $\FM$ via the projection to $M$.
In terms of these components, the Darboux equation \eqref{DarbouxPDE} is written as
\begin{equation}\label{DEcomponentwise}
u_{11} u_{22} - u_{12}^2 =  (1-u_1^2 - u_2^2)K.
\end{equation}
The combinations of components on both sides are invariant under pointwise rotations of the frame, so this gives a well-defined condition on $M$.

As stated above, we wish to define a Monge-Amp\'ere system that is equivalent to the partial differential equation \eqref{DEcomponentwise}.
We note that, in light of \eqref{ucovs}, the equation \eqref{DEcomponentwise} is equivalent to the vanishing of the 2-form
\begin{equation}\label{DEformdef}
\Upsilon:=(du_1 - u_2 \eta^2_1) \wedge (du_2 -u_1 \eta^1_2) - K(1-u_1^2 - u_2^2) \eta^1 \wedge \eta^2.
\end{equation}
Based on this observation, we will define $\J_0$ by beginning with $\FM$ and adding $u, u_1, u_2$ as extra variables which are constrained by the nondegeneracy condition $|du|^2 < 1$.  In other words,
$\J_0$ is defined on the 6-dimensional manifold $\FM \times \manW$, where $\manW $ denotes the open subset of $\R^3$ where the coordinates $u,u_1,u_2$ satisfy $u_1^2 + u_2^2<1$. The algebraic generators of $\J_0$ will be the 1-form
\begin{equation}\label{psi0formdef}
\psi_0:=du - u_1 \eta^1 - u^2 \eta^2,
\end{equation}
and the 2-forms $d\psi_0$ and $\Upsilon$.  This system is elliptic at points where $K>0$ and hyperbolic at points where $K<0$.  Moreover, as with systems $\I_0$ and $\I_1$ above, $\J_0$ has a Cauchy characteristic symmetry corresponding to pointwise rotations of the frame, and thus
descends to be a well-defined Monge-Amp\`ere system on the five-dimensional quotient of $\FM \times \manW$ under the diagonal action of $SO(2)$.
(Alternatively, we may break the symmetry by specializing to a particular choice of frame, which will do when we work with specific metrics below.)

We are able to define an EDS encoding the Darboux equation on a relatively low-dimensional space, using only $u$ and its first derivative components as variables,  because the equation is only mildly nonlinear in the second derivatives of $u$.  However, to make the connection with the Gauss-Codazzi system we must pass to the prolongation $\J_1$, and thus it is
 necessary to introduce the second covariant derivative components as new variables, and define additional 1-forms
whose vanishing is equivalent to the equations \eqref{ucovs}.   Accordingly, on $\FM \times \R^6$ (where we take $u$, $u_a$ and $u_{ab}=u_{ba}$ as coordinates on the second factor) define the 1-forms
\begin{equation}\label{psiformsdef}
\psi_0 := du - u_a \eta^a, \quad \psi_a:=du_a - u_b \eta^b_a - u_{ab}\eta^b, \qquad 1\le a,b \le 2,
\end{equation}
let $\Sigma_2 \subset \FM \times \R^6$ be the 8-dimensional submanifold where \eqref{DEcomponentwise} holds and $u_1^2 + u_2^2 < 1$
(note that this condition ensures that $\Sigma_2$ is smooth), and let $\J_1$ be the Pfaffian system on $\Sigma_2$ generated by $\psi_0, \psi_1, \psi_2$ and their exterior derivatives.  Then $\J_1$ is a Monge system of class 3 and is the prolongation of $\J_0$.

We now define a mapping $\rmap$ that takes integral surfaces of $\I_1$ to integral surfaces of $\J_1$.  (This mapping will merely
formalize the derivation of the Darboux equation from the Gauss-Codazzi system that we gave earlier in \S\ref{introsec}.)
As before, we fix a unit vector $\hatj$ in $\R^3$, and let $\tilde\Sigma_1$ be the open subset of $\Sigma_1$ where $\ve_3\cdot \hatj \ne 0$.
We will first define $\rmap:\tilde \Sigma_1 \to \FM \times \R^6$ by requiring $\rmap$ to be the identity on the $\FM$-factor, and letting
\begin{equation}\label{rdefuvals}
u = \vx \cdot \hatj, \qquad u_a = \ve_a \cdot \hatj, \qquad u_{ab} = h_{ab} (\ve_3 \cdot \hatj).
\end{equation}
Then, because the $h_{ab}$ satisfy the Gauss equation \eqref{gausseq}, and $(u_1)^2 + (u_2)^2 = 1-(\ve_3\cdot \hatj)^2 < 1$, the image of $\rmap$ lies in $\Sigma_2$.

\begin{prop} The mapping $\rmap:\tilde\Sigma_1 \to \Sigma_2$ makes $\I_1$ an integrable extension of $\J_1$.
\end{prop}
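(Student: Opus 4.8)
The plan is to verify directly that $\rmap$ fits the definition of integrable extension given in \S2: namely, that $\J_1$ pulls back into $\I_1$ (more precisely that $\rmap^*\J_1 \subset \I_1$ when restricted to $\tilde\Sigma_1$), and that $\I_1$ is generated over $\rmap^*\J_1$ by sections of a sub-bundle $E \subset T^*\tilde\Sigma_1$ transverse to the fibers of $\rmap$, with no new generators of degree $\geq 2$. Since both systems are Pfaffian, the whole claim reduces to linear algebra on 1-forms once we compute the pullbacks of $\psi_0, \psi_1, \psi_2$ under $\rmap$.

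First I would compute $\rmap^*\psi_0$. Using \eqref{rdefuvals} and the structure equations \eqref{d-of-basepoint}, \eqref{d-of-frame} on $\FR$, we have $du = d(\vx\cdot\hatj) = \ve_i\w^i \cdot \hatj = (\ve_1\cdot\hatj)\w^1 + (\ve_2\cdot\hatj)\w^2 + (\ve_3\cdot\hatj)\w^3 = u_1\w^1 + u_2\w^2 + (\ve_3\cdot\hatj)\theta_0$; subtracting $u_a\eta^a = u_a(\w^a - \theta_a)$ shows $\rmap^*\psi_0 = (\ve_3\cdot\hatj)\theta_0 + u_1\theta_1 + u_2\theta_2$, which lies in $\pmap^*\I_0 \subset \I_1$. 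Next, $\rmap^*\psi_a = d(\ve_a\cdot\hatj) - u_b\eta^b_a - u_{ab}\eta^b$. Expanding $d(\ve_a\cdot\hatj) = \ve_j\w^j_a\cdot\hatj = u_1\w^1_a + u_2\w^2_a + (\ve_3\cdot\hatj)\w^3_a$, and using $u_{ab} = h_{ab}(\ve_3\cdot\hatj)$ together with $\nu_a = \w^3_a - h_{ab}\eta^b$, one finds $\rmap^*\psi_a$ is a combination of $(\ve_3\cdot\hatj)\nu_a$, the $\theta$'s (from replacing $\w^b_a$ by $\eta^b_a$ up to $\theta_3$-type corrections), and $\psi_0$-type terms — in every case a section of $\I_1$. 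This establishes $\rmap^*\J_1 \subset \I_1$ at the level of 1-form generators, and since $\I_1$ is differentially closed the 2-form generators $d\psi_0, d\psi_1, d\psi_2$ pull back into $\I_1$ automatically.

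It then remains to exhibit $E$. The fibers of $\rmap:\tilde\Sigma_1 \to \Sigma_2$ are one-dimensional: by \eqref{rdefuvals} the map is the identity on $\FM$ and determines $u, u_a, u_{ab}$, but leaves one degree of freedom in the $\FR$-factor — concretely, the choice of $\ve_1, \ve_2, \ve_3$ subject to $\ve_a\cdot\hatj = u_a$ and the Gauss equation, which after fixing the overall $SO(2)$ amounts to a single parameter (the position of the frame relative to $\hatj$ within the plane normal to $\ve_3$, equivalently the rotation fixing $\hatj$). I would take $E$ to be spanned by, say, $\theta_1$ (or an appropriate combination among $\theta_1, \theta_2, \theta_3$), and check that its pairing with the fiber tangent vector is nonzero; the three forms $\theta_1, \theta_2, \theta_3$ together with $\rmap^*\J_1$ should recover all of $\I_1$. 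The main obstacle, and the step requiring genuine care rather than bookkeeping, is confirming that exactly one extra 1-form beyond $\rmap^*\J_1$ is needed and that it is $\rmap$-transverse — i.e., correctly identifying the rank drop. One counts: $\J_1$ has rank 3 on the 8-manifold $\Sigma_2$, its pullback has rank 3 on the 9-manifold $\tilde\Sigma_1$ (the forms $\rmap^*\psi_i$ remain independent since their $\theta$- and $\nu$-parts are), while $\I_1$ has rank 6 on $\tilde\Sigma_1$ — so one must verify that the $3 = 6 - 3$ additional generators of $\I_1$, modulo $\rmap^*\J_1$, can be chosen from among $\theta_0, \theta_1, \theta_2, \theta_3, \nu_1, \nu_2$ in a way that spans a rank-3 complement transverse to the $\rmap$-fiber, with no new 2-form generators forced. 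Once the rank count and the transversality of the chosen complement are pinned down, the proposition follows, and the closing remark of \S2 — that $\J_1$ pulls back to $\rmap^{-1}(S)$ as a Frobenius system whenever $S$ is an integral surface of $\I_1$ — is automatic.
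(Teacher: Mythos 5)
Your first two paragraphs are essentially the paper's computation: the pullbacks work out to $\rmap^*\psi_0=(\ve_3\cdot\hatj)\theta_0+(\ve_a\cdot\hatj)\theta_a$ and $\rmap^*\psi_1=-(\ve_2\cdot\hatj)\theta_3+(\ve_3\cdot\hatj)\nu_1$, $\rmap^*\psi_2=(\ve_1\cdot\hatj)\theta_3+(\ve_3\cdot\hatj)\nu_2$, so the containment $\rmap^*\J_1\subset\I_1$ and the rank-3 count for the pulled-back sub-bundle are fine. The genuine gap is in your third paragraph, and it begins with a concrete error: the fibers of $\rmap$ are \emph{three}-dimensional, not one-dimensional. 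You correctly account for the one-parameter rotation of the frame about $\hatj$, but \eqref{rdefuvals} pins down only the single component $\vx\cdot\hatj$ of the basepoint, leaving a two-parameter family of translations of $\vx$ in directions perpendicular to $\hatj$ ($\dim\tilde\Sigma_1=11$, $\dim\Sigma_2=8$). Consequently $E$ must have rank $3$ — consistent with your own count $6-3=3$ at the end of the paragraph, which contradicts your proposed $E=\{\theta_1\}$. A single 1-form cannot pair nondegenerately with a 3-dimensional fiber, and rank $1+3$ cannot generate the rank-6 bundle $I_1$. The paper takes $E$ spanned by $\theta_0$, $\theta_3$ and $(\ve_1\cdot\hatj)\theta_2-(\ve_2\cdot\hatj)\theta_1$; note that $\theta_0,\theta_1,\theta_2$ are exactly the forms that see the translational fiber directions you omitted.

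The second item you leave as a to-do — that no new 2-form generators are forced — also needs an actual argument, and the paper supplies a short one you should reproduce: since $\I_1$ is a prolongation, its first derived system is spanned by $\theta_0,\dots,\theta_3$, so the only 2-form generators of $\I_1$ beyond the algebraic ideal of its 1-forms are $d\nu_1$ and $d\nu_2$; these are equivalent, modulo the 1-forms already accounted for, to $\rmap^*d\psi_1$ and $\rmap^*d\psi_2$ precisely because the coefficients $(\ve_3\cdot\hatj)$ multiplying $\nu_1,\nu_2$ in \eqref{drpsi} are nonvanishing on $\tilde\Sigma_1$. With the rank-3 choice of $E$ and this observation, the argument closes; as written, your proposal does not.
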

\begin{proof} We must show first that the pullbacks of the generator 1-forms of $\J_1$ lie in $\I_1$, and then define a complementary sub-bundle $E$
such that $\I_1$ is generated algebraically by sections of $E$ and the pullbacks of forms in $\J_1$.

First, using \eqref{d-of-basepoint}, \eqref{d-of-frame} and \eqref{rdefuvals} we compute
$$\rmap^*du = (\ve_a \cdot \hatj) \w^a, \quad \rmap^*du_a = (\ve_i \cdot \hatj) \w^i_a.$$
From these it follows, respectively, that
$$\rmap^* \psi_0 = (\ve_a \cdot \hatj) \theta_a,$$
and
\begin{equation}\label{drpsi}
\begin{aligned}
\rmap^*\psi_1 &= - (\ve_2 \cdot \hatj) \theta_3 + (\ve_3 \cdot \hatj) \nu_1, \\
\rmap^*\psi_2 &= (\ve_1 \cdot \hatj) \theta_3 + (\ve_3 \cdot \hatj) \nu_2.
\end{aligned}
\end{equation}
Thus, the pullbacks of the generator 1-forms of $\J_1$ span a rank-3 sub-bundle within $I_1$.  We define $E$ as spanned by $\theta_0, \theta_3$ and $(\ve_1 \cdot \hatj) \theta_2 - (\ve_2 \cdot \hatj) \theta_1$.  Thus,
the 1-forms of $\I_1$ may be expressed as linear combinations of sections of $E$ and $\rmap^*\psi_0, \rmap^*\psi_1, \rmap^*\psi_2$.
Since $\I_1$ is a prolongation, its first derived system is spanned by the pullbacks from $\Sigma_1$ of the 1-forms $\theta_0, \ldots, \theta_3$ of $\I_0$,
and thus its 2-form generators are $d\nu_1$ and $d\nu_2$.
Because the rightmost coefficients in \eqref{drpsi} are nonzero, $\rmap^* d\psi_1$ and $\rmap^* d\psi_2$ give an equivalent pair
of 2-form generators.  Thus, $\I_1$ is algebraically generated by $\rmap^*\J_1$ and sections of $E$.
\end{proof}

\begin{cor}\label{jcor}  If $\J_1$ is Darboux-integrable, then $\I_1$ is Darboux-integrable.
\end{cor}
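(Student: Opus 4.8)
The plan is to deduce this from the fact, established in the preceding proposition, that $\rmap$ exhibits $\I_1$ as an integrable extension of $\J_1$; indeed, Darboux integrability is inherited by any integrable extension of a hyperbolic or elliptic Monge system, and since the question is local the Cauchy characteristic symmetries play no role. I would carry out the hyperbolic case ($K<0$) explicitly and note that the elliptic case is word-for-word the same, with real $1$-forms replaced by complex-valued ones and the pair of characteristic systems $\V_+,\V_-$ replaced by the single system $\V$.

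The first step is to identify the characteristic data of $\I_1$ with the $\rmap$-pullback of that of $\J_1$. Write $I_{\J_1}$ and $I_{\I_1}$ for the two bundles of generator $1$-forms, and let $\V_\pm$ and $\Vchat_\pm$ denote the characteristic Pfaffian systems of $\J_1$ and $\I_1$ respectively. The proof of the proposition records that $I_{\I_1}=E\oplus\rmap^*I_{\J_1}$, with $E$ the transverse sub-bundle constructed there, and that $\rmap^*d\psi_1,\rmap^*d\psi_2$ form an equivalent pair of $2$-form generators for $\I_1$. From this it follows that the $2$-form bundle of $\I_1$ is the $\rmap$-pullback of that of $\J_1$; wedging the three generators of $E$ with $\rmap^*(\psi_0\wedge\psi_1\wedge\psi_2)$ then converts $Q(\rmap^*\Upsilon)$ for $\I_1$ into $\rmap^*(Q(\Upsilon))$ for $\J_1$ up to a nonvanishing scalar, so the quadratic forms $Q$ of the two systems correspond under $\rmap^*$. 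Hence $\I_1$ is again hyperbolic, its null $2$-forms are $\rmap^*\Upsilon_\pm$, and its characteristic systems are $\Vchat_\pm=E+\rmap^*\V_\pm$.

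Next, assuming $\J_1$ is Darboux-integrable, I would choose closed sections $\alpha_1,\alpha_2$ of $\V_+$ that are linearly independent from each other and from the sections of $I_{\J_1}$, and check that $\rmap^*\alpha_1,\rmap^*\alpha_2$ do the analogous job for $\Vchat_+$. They are closed, they lie in $\rmap^*\V_+\subset\Vchat_+$, and they are linearly independent because $\rmap$ is a submersion. The step requiring care is their independence from $I_{\I_1}$: if $c_1\rmap^*\alpha_1+c_2\rmap^*\alpha_2$ were a section of $I_{\I_1}=E\oplus\rmap^*I_{\J_1}$, then pairing it with vectors tangent to the fibers of $\rmap$ annihilates the $\rmap^*$-terms and forces the $E$-component to vanish, by the transversality of $E$; thus $c_1\rmap^*\alpha_1+c_2\rmap^*\alpha_2=\rmap^*\beta$ for a section $\beta$ of $I_{\J_1}$, and injectivity of $\rmap^*$ on cotangent spaces gives $c_1\alpha_1+c_2\alpha_2\in I_{\J_1}$, hence $c_1=c_2=0$. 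Running the identical argument with $\V_-$ (which is also Darboux-integrable by hypothesis) handles $\Vchat_-$, and so $\I_1$ is Darboux-integrable.

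I expect the only genuine obstacle to be the first step — confirming that the type and the characteristic systems of $\I_1$ are exactly the $\rmap$-pullbacks of those of $\J_1$. This is bookkeeping rather than anything deep: one must keep track of the ranks and of the splitting $I_{\I_1}=E\oplus\rmap^*I_{\J_1}$, and, if one insists on working with the honest Monge systems rather than on $\tilde\Sigma_1$ and $\Sigma_2$, note that $\rmap$ descends to the $SO(2)$-quotients. Everything after that is formal, and in fact one could shortcut the whole argument by invoking the general principle that an integrable extension of a Darboux-integrable Monge system is again Darboux-integrable.
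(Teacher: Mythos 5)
Your argument is correct, and it rests on exactly the same structural fact as the paper's proof, namely the preceding proposition that $\rmap$ makes $\I_1$ an integrable extension of $\J_1$. The difference is in what happens next: the paper disposes of the corollary in one line by citing Theorem 5.1 of Anderson--Fels \cite{AFB}, which is precisely the ``general principle'' you mention at the end (an integrable extension of a Darboux-integrable system is Darboux-integrable), whereas you reprove the relevant special case by hand. Your verification is sound: since the $2$-form generators of $\I_1$ are the $\rmap$-pullbacks of those of $\J_1$ modulo the $1$-forms, the null decomposable $2$-forms pull back to null decomposable $2$-forms, so $\Vchat_\pm = E + \rmap^*\V_\pm$; and your transversality argument correctly shows that the pulled-back closed $1$-forms stay independent of $E \oplus \rmap^* I_{\J_1}$. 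What your route buys is a self-contained proof that does not lean on the external reference; what it costs is the bookkeeping you acknowledge, plus one point worth stating more carefully: the quadratic form $Q$ and hence the type and characteristic systems are only honestly defined on the $SO(2)$-quotients (on $\Sigma_1$ itself the wedge $\Upsilon\wedge\Upsilon\wedge\theta_0\wedge\cdots\wedge\nu_2$ is not of top degree), so one should either verify that $\rmap$ is equivariant and descends to the quotients --- it is, since $u_a=\ve_a\cdot\hatj$ and $u_{ab}=h_{ab}(\ve_3\cdot\hatj)$ transform correctly under the diagonal rotation --- or contract against the Cauchy characteristic vector field throughout. The paper is equally cavalier on this point, so this is a shared, and harmless, omission rather than a gap in your argument.
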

\begin{proof} Because $\I_1$ is an integrable extension of $\J_1$, this follows by Theorem 5.1 in \cite{AFB}.
\end{proof}

This lets us prove our main result:
\begin{thm}  The Darboux equation \eqref{DarbouxPDE} associated to a metric $g$ on surface $M$ is Darboux-integrable at the 2-jet level if and only if $g$ is locally isometric to one of the metrics listed in Theorem \ref{metriclist}.
\end{thm}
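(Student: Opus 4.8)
The plan is to reduce this to the classification already in hand for the moving-frames system $\I_0$. Here ``Darboux-integrable at the 2-jet level'' means that the prolongation $\J_1$ of $\J_0$ is a Darboux-integrable Monge system, so the assertion is that $\J_1$ is Darboux-integrable exactly when $k=\sqrt{\varepsilon K}$ satisfies \eqref{kcond}; by Theorem \ref{metriclist} this is equivalent to $g$ being locally isometric to one of $g_1,\dots,g_4$ (each of which is an explicit solution of \eqref{kcond}, so the equivalence runs both ways). The bridge to $\I_0$ is the integrable extension $\rmap:\tilde\Sigma_1\to\Sigma_2$ of the preceding proposition, which makes $\I_1$ an integrable extension of $\J_1$.

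The ``only if'' direction needs nothing new: if $\J_1$ is Darboux-integrable, then Corollary \ref{jcor} gives that $\I_1$ is Darboux-integrable, Theorem \ref{prongthm} then forces \eqref{kcond}, and Theorem \ref{metriclist} puts $g$ on the list.

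For the ``if'' direction, assume \eqref{kcond} holds. By Theorem \ref{kcondthm} the system $\I_0$ is Darboux-integrable, and since the prolongation of a Darboux-integrable system is again Darboux-integrable, so is $\I_1$. It remains to push Darboux integrability back down $\rmap$ from $\I_1$ to $\J_1$, which I would do at the level of first integrals of the characteristic systems. Because $\rmap^*\J_1\subset\I_1$, and the characteristic systems of a Monge system are built from its $1$-form generators together with the factors of its null $2$-forms, the pullback under $\rmap$ of each characteristic system of $\J_1$ lies inside the corresponding characteristic system of $\I_1$, and hence $\rmap$ carries the derived characteristic flags of $\J_1$ into those of $\I_1$. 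The crux is to check that the two independent closed $1$-forms in the terminal Frobenius member of each derived characteristic flag of $\I_1$---which exist because $\I_1$ is Darboux-integrable, and which were exhibited explicitly for each of $g_1,\dots,g_4$ in \cite{CITV}---may be chosen $\rmap$-basic, that is, pulled back from $\Sigma_2$. Granting this, those base functions are first integrals of the corresponding characteristic system of $\J_1$ and supply the pair of independent closed $1$-forms needed for $\J_1$ to be Darboux-integrable (in the elliptic case $K>0$ it suffices, as usual, to treat one of the conjugate pair of characteristic systems).

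The descent step is where I expect the real work to be, since it is exactly the converse of Corollary \ref{jcor}, and Darboux integrability need not pass down a general integrable extension. The natural way to establish it here is structural: the sub-bundle $E$ of the extension is spanned by $\theta_0$, $\theta_3$ and $(\ve_1\cdot\hatj)\theta_2-(\ve_2\cdot\hatj)\theta_1$, all lying in the first derived system of $\I_1$, so the directions $\rmap$ adds sit in the lower (eventually Frobenius) part of the derived characteristic flags; one should verify that they enter $\Vchat_+$ and $\Vchat_-$ symmetrically, so that every first integral coming from the derived flag of either is automatically $\rmap$-basic. If that turns out to be awkward in closed form, one can instead verify the conclusion case by case over the four metrics, combining the explicit characteristic first integrals of \cite{CITV} with the formulas \eqref{rdefuvals} defining $\rmap$. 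Either way the argument is short, because the substantive classifications---Theorems \ref{kcondthm} and \ref{prongthm}---are already in place.
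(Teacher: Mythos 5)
Your ``only if'' direction is exactly the paper's: Corollary \ref{jcor}, then Theorem \ref{prongthm}, then Theorem \ref{metriclist}. The gap is in the ``if'' direction. What you propose there is the converse of Corollary \ref{jcor} --- descending Darboux integrability from $\I_1$ to $\J_1$ along the integrable extension $\rmap$ --- and, as you yourself note, this does not hold for a general integrable extension. Your structural sketch does not close the gap: the crucial claim is that the two independent first integrals of each characteristic system $\Vchat_\pm$ of $\I_1$ can be chosen $\rmap$-basic, i.e.\ constant on the $3$-dimensional fibers of $\rmap$ (translations of $\vx$ perpendicular to $\hatj$ together with rotations of the frame about $\hatj$), and nothing in the derived-flag containments forces this. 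Moreover, the only first integrals of $\Vchat_\pm$ that \cite{CITV} hands you are those of $\pmap^*\V_\pm^{(3)}$, which are functions on $\Sigma_0$ and know nothing about the $h_{ab}$; but the explicit answer shows that each terminal Frobenius system $\Vhat_\pm^{(\infty)}$ of $\J_1$ contains only \emph{one} generator pulled back from the $\J_0$ level (spanning the rank-one terminal derived system of $V_\pm$), while the second genuinely involves the prolongation variables $p,q$ --- equivalently the $u_{ab}$, hence via \eqref{rdefuvals} the $h_{ab}$. So the pair of first integrals you need for $\J_1$ cannot both be manufactured from $\I_0$-level data, and the mechanism you propose for producing them does not deliver.

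The paper makes no attempt at descent. It proves the ``if'' direction by direct computation, one metric at a time: for each of $g_1,\dots,g_4$ it fixes an explicit orthonormal coframe, writes down $\J_0$, finds the decomposable null $2$-forms and hence the characteristic systems $V_\pm$, prolongs to $\J_1$ with new fiber variables $p,q$ (or $z=p+\ri q$ in the elliptic cases), computes the characteristic systems $\Vhat_\pm$ of the prolongation, and exhibits an explicit rank-$2$ Frobenius subsystem of each. That computation is the entire substance of the ``if'' direction; your proposal defers it (``one should verify\dots''), so as written the proof is incomplete.
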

\begin{proof}  From  Corollary \ref{jcor} it follows that if the system $\J_1$ is Darboux-integrable then $\I_1$ is Darboux-integrable, and hence by
Theorem \ref{prongthm} the metric must satisfy the conditions \eqref{kcond}.  It only remains to check that, for each of the metrics listed in Theorem
\ref{metriclist}, the system $\J_1$ is in fact Darboux-integrable.

We begin with metric $g_4$, since in this case the $\J_0, \J_1$ are hyperbolic and the characteristic systems are real.  Before showing
that $\J_1$ is Darboux-integrable, we will calculate generators and characteristic systems for $\J_0$ on $M\times \manW$, pulling the
previously-defined system back by a specific choice of orthonormal coframe on $M$:
$$\etat^1 = \cos^2 x\,dx, \quad \etat^2 = \sin x\,dy,$$
Then the structure equations \eqref{Mstruc} imply that $\etat^1_2 = -\sec x\,dy$ and $K=-\sec^4 x$.
In place of the coordinates $u_1, u_2$ on $\manW$ we will use alternate coordinates $\theta, \phi$ related to them by
\begin{equation}\label{u1u2change}
u_1 = \sin\phi \cos\theta, \quad u_2=\sin\phi \sin \theta,\qquad \phi \in (0,\pi/2).
\end{equation}
Using \eqref{DEformdef} and \eqref{psi0formdef} we now express the generators of $\J_0$ as
\begin{equation}\label{psi0form}
\psi_0 = du - \sin\phi\, (\cos \theta\, \etat^1 + \sin \theta\, \etat^2)
\end{equation}
and
$$\Upsilon = \cos\phi\sin\phi\,d\phi \wedge (d\theta + \sec x\, dy) + \cos^2 \phi \sin x \sec^2 x \, dx \wedge dy.$$
By solving for the null directions of the quadratic form $Q$ defined in \S2 above, we find that the decomposable 2-form
generators of the ideal are $\Upsilon \mp (\cos\phi \sec^2 x) d\psi_0$, and the characteristic systems are
$$V_\pm = \{\psi_0, d\phi \pm (\sec^2x)\sigma, d\theta +\sec x\,dy \pm (\cot\phi \sec^2x )\tau \}$$
where, for the sake of convenience, we let
\begin{equation}\label{sigmataudef}
\sigma:=(\sin\theta)\etat^1 - (\cos\theta)\etat^2, \qquad \tau:=(\cos\theta)\etat^1 +(\sin\theta)\etat^2.
\end{equation}

On any admissible integral surface of $\J_0$, each system $V_+$, $V_-$ pulls back to have rank one.  In particular, the last two members of each system restrict to be linearly dependent. Thus, the prolongation
$\J_1$ is generated by $\psi_0$ and
\begin{align*}
\tpsi_1 &:= \tan\phi(d\theta +\sec x\,dy) +(\sec^2x )\tau -p(d\phi +(\sec^2x)\sigma),\\
\tpsi_2 &:= \tan\phi(d\theta +\sec x\,dy) -(\sec^2x)\tau -q(d\phi -(\sec^2x)\sigma),
\end{align*}
where we introduce $p,q$ as new variables that give the coefficients of these linear dependencies.
After subtracting off suitable multiples of $\tpsi_1, \tpsi_2$, the exterior derivatives $d\tpsi_1$ and $d\tpsi_2$ become
decomposable 2-forms.   Each characteristic system of $\J_1$ is generated by the 1-forms of $\J_1$ and the factors of one
of these decomposable 2-forms; for example, the factors in $d\tpsi_1$ lead to
$$\Vhat_+ =\{ \psi_0, \tpsi_1, \tpsi_2, \tau-q\sigma, \cot x (\cos^2 x\,dp-\cot\phi (1+p^2)\tau) +(\sin \theta+ p\cos\theta) (\tau -p\sigma) \},
$$
while the generators of the other characteristic system are obtained by switching $p$ and $q$, and changing angle $\phi$ to its complement $\pi/2 -\phi$.

Each characteristic system turns out to contain a rank 2 Frobenius system, so that $\J_1$ is Darboux-integrable.  In the case of $\Vhat_+$, the Frobenius system is
\begin{multline*}
\Vhat_+^{(\infty)} = \{ \cos \theta \,d\theta + (\sin\theta \cot\phi + \cot x)d\phi + (\sin \theta\cot x + \cot \phi)dx, \\
dp+p\tan x (\sin\theta+p \cos\theta)d\phi + \left( (p+\tan\theta)\tan x-(1+p^2)\sec\theta\cot\phi\right)(dx +\sin\theta\,d\phi)
\}.
\end{multline*}
We note that first 1-form does not involve the prolongation variables $p,q$, and thus is the pullback of a 1-form defined on $M \times \manW$; in fact, this 1-form is integrable, and spans the terminal derived system of $V_+$.
The Frobenius system inside $\Vhat_-$ is obtained from this one by the same switching of variables mentioned above, and also contains
the pullback of the terminal derived system of $V_-$.

We now turn to the metrics $g_1, g_2, g_3$, for which the Gauss curvature is positive, and hence the systems $\J_0, \J_1$ are of elliptic type.
The decomposable 2-forms in this case will be complex-valued, and the same goes for the 1-forms generating the characteristic systems, but for $\J_0$ and $\J_1$ each pair of characteristic systems will be related by complex conjugation, so that it is only necessary to do calculations for one of the pair.

Other than the complexification, the calculation for metric $g_1$ resembles that for $g_4$, except that $\cos x$ and $\sin x$ are replaced by hyperbolic functions. The orthonormal
coframe on $M$ is
$$\etat^1 = \cosh^2 x\,dx, \quad \etat^2 = \sinh x\,dy,$$
resulting in $\etat^1_2 = -\sech x\,dy$ and $K=\sech^4 x$.  We make the same change of variable as in \eqref{u1u2change}, so that
the 1-form generator of $\J_0$ takes the same form as in \eqref{psi0form}.  The 2-form generator of $\J_0$ is
$$\Upsilon = \cos\phi\sin\phi\,d\phi \wedge (d\theta + \sech x\, dy) + \cos^2 \phi \sinh x \sech^2 x \, dx \wedge dy.$$
The decomposable 2-forms in the ideal are $\Upsilon \mp \ri (\cos\phi \sech^2 x) d\psi_0$; taking the factors in the case of the upper sign
gives the characteristic system
$$V_+ = \{ \psi_0, d\phi + \ri (\sech^2 x )\sigma, d\theta + \sech x\, dy + \ri (\cot\phi \sech^2 x) \tau \}$$
where we again take $\sigma, \tau$ as in \eqref{sigmataudef}.

The prolongation $\J_1$ is generated by $\psi_0$ and the real and imaginary parts of
$$\tpsi_1 := \tan \phi ( d\theta +\sech x\,dy) +\ri (\sech^2x ) \tau -(p+\ri q) (d\phi +\ri (\sech^2x) \sigma) \}$$
where new variables $p,q$ are real-valued.  Computing the exterior derivative $d\tpsi_1$ modulo the 1-form generators of $\J_1$ yields a decomposable 2-form
whose factors allow us to form the characteristic system
$$\Vhat_+ = \{ \psi_0, \Re\tpsi_1, \Im\tpsi_1, \tau - \overline{z} \sigma,  \coth x( \cosh^2 x\, dz -\ri \cot\phi(1+z^2) \tau) + (\sin \theta + z\cos \theta)(z\sigma-\tau)\},$$
where we have set $z=p+\ri q$.  (Again, the other characteristic system is the complex conjugate of this one.) Each characteristic system contains a rank 2 Frobenius system, for example
\begin{multline*}
\Vhat_+^{(\infty)} =  \{ \cos \theta \,d\theta + (\sin\theta \cot\phi -\ri \coth x)d\phi + (\sin \theta\coth x +\ri \cot \phi)dx, \\
dz +\ri z \tanh x (\sin \theta+ z\cos\theta) d\phi - \left( (z+\tan\theta) \tanh x+ \ri (1+z^2)\sec\theta \cot\phi\right)(dx - \ri \sin\theta d\phi)\}.
\end{multline*}
Again, the first 1-form in this system is integrable, and is the pullback of a 1-form spanning the terminal derived system of $V_+$.

The calculation for metric $g_2$ is similar, but with $\cosh x$ and $\sinh x$ interchanged.

The calculation for metric $g_3$ is similar to $g_1$ and $g_2$ but simpler in form.  The orthonormal coframe on $M$ is
$$\etat^1 = x\,dx, \quad \eta^2 = x\,dy,$$
resulting in $\eta^1_2 = -(1/x) dy$ and $K = 1/x^4$. The variable change \eqref{u1u2change} and form of $\psi_0$ are the same. The 2-form generator of
$\J_0$ is
$$\Upsilon =\cos\phi\sin\phi\,d\phi \wedge (d\theta +x^{-1} dy) + x^{-2} \cos^2 \phi \, dx \wedge dy.$$
We define $\sigma, \tau$ as in \eqref{sigmataudef}, and in terms of these one characteristic system is
$$V_+ = \{\psi_0, x^2 d\phi + \ri \sigma, x^2 d\theta + x\, dy + \ri (\cot\phi)\tau\}.$$
The prolongation $\J_1$ is again generated by $\psi_0$ and the real and imaginary parts of
$$\tpsi_1 := \tan \phi ( x^2 d\theta + x\, dy) + \ri\tau -(p+\ri q)  (x^2 d\phi + \ri \sigma).$$
The corresponding characteristic system of the prolongation is
$$\Vhat_+ =  \{ \psi_0, \Re\tpsi_1, \Im\tpsi_1, \tau - \overline{z} \sigma,
x^2 dz -\ri \cot\phi (1+z^2) \tau + (\sin\theta+ z \cos\theta)(z\sigma-\tau) \},$$
and this contains a rank 2 Frobenius system
\begin{multline*}
\Vhat_+^{(\infty)} = \{ \cos\theta d\theta + (\sin\theta \cot\phi - \ri) d\phi + (\sin\theta + \ri \cot\phi) x^{-1} dx,\\
dz +\ri z (\sin\theta+ z\cos\theta)d\phi - (z + \tan \theta+\ri (1+z^2)\sec\theta \cot\phi) (x^{-1}dx -\ri \sin\theta d\phi).
\end{multline*}
\end{proof}

\section*{Acknowledgements}
The author is grateful to the organizers of a special session on geometry of submanifolds, held in honor of the 70th birthday of Professor Bang-Yen Chen, for inviting this submission.
The author also acknowledges the support of the National Science Foundation, through an Independent Research and Development plan undertaken by the author while an NSF employee.

\end{document}